\newtheorem{thm}{Theorem}[section]
\newtheorem{cor}{Corollary}[section]
\newtheorem{lem}{Lemma}[section]
\newtheorem{rem}{Remark}[section]
\newtheorem{ex}{Example}[section]
\theoremstyle{definition}
\theoremstyle{remark}
\numberwithin{equation}{section}
\begin{document}
\title[Lower Bound on Translative Covering Density of Tetrahedra]
{Lower Bound on Translative Covering \\Density of Tetrahedra}

\author{Yiming Li}
\address{
Center for Applied Mathematics \\
Tianjin University\\
Tianjin, 300354\\
P. R. China}
\email{xiaozhuang@tju.edu.cn}

\author{Miao Fu}
\address{
Center for Applied Mathematics \\
Tianjin University\\
Tianjin, 300354\\
P. R. China}
\email{miaofu@tju.edu.cn}

\author{Yuqin Zhang}
\address{
School of Mathematics \\
Tianjin University\\
Tianjin, 300072\\
P. R. China}
\email{yuqinzhang@tju.edu.cn}

\subjclass{52C17, 52B10, 52C07}
\keywords{Translative covering density, tetrahedra, cube}
\begin{abstract}
In this paper, we present the first nontrivial lower bound on the translative covering density of tetrahedra. To this end, we show the lower bound, in any translative covering of tetrahedra, on the density relative to a given cube. The resulting lower bound on the translative covering density of tetrahedra is $1+1.227\times10^{-3}$.
\end{abstract}
\maketitle
\section{Introduction}
More than 2,300 years ago, Aristotle claimed that \emph{congruent regular tetrahedra can fill the whole space with neither gap nor overlap}. In modern terms, he claimed that regular tetrahedra of given size can tile the three-dimensional Euclidean space $\mathbb{E}^3$. In other words, they can form both a packing and a covering in $\mathbb{E}^3$ simultaneously. Unfortunately, this statement is wrong. Aristotle's mistake was discovered by Regiomontanus in the fifteenth century (see \cite{Lagarias}). Then, two natural questions arose immediately: \emph{What is the density of the densest tetrahedron packing and what is the density of the thinnest tetrahedron covering}? In fact, the packing case was emphasized by D. Hilbert \cite{Hilbert} as a part of his 18th problem. Since then, many scholars, including mathematicians, physicists, and chemical engineers have made contributions (mistakes as well) to tetrahedra packings. For the complicated history, we refer to \cite{Lagarias}. For packings and tetrahedron packings, we refer to \cite{G.Fejes}, \cite{Gravel}, \cite{Hoylman} and \cite{Zong.t}.

Covering is a dual concept of packing. Nevertheless, our knowledge about covering is still very limited. Let $K$ denote a convex body in $\mathbb{E}^n$ and let $O$ denote a centrally symmetric one. In particular, let $B_n$ denote the $n$-dimensional unit ball and let $T_n$ denote an $n$-dimensional regular simplex with unit edges. Let $\theta^c(K)$, $\theta^t(K)$ and $\theta^l(K)$ denote the densities of the thinnest congruent covering, the thinnest translative covering and the thinnest lattice covering of $\mathbb{E}^n$ with $K$, respectively. Clearly,
\begin{align}
1\leq\theta^c(K)\leq\theta^t(K)\leq\theta^l(K). \nonumber
\end{align}

Our covering knowledge is comparatively complete in $\mathbb{E}^2$. In 1939, R. Kershner \cite{Kershner} proved that
\begin{align}
\theta^t(B_2)=\theta^l(B_2)=\frac{2\pi}{\sqrt{27}}.\nonumber
\end{align}
In 1946 and 1950, L. Fejes T\'{o}th \cite{L.Fejes, L.Fejes.} showed that
\begin{align}
\theta^t(O)=\theta^l(O)\leq\frac{2\pi}{\sqrt{27}}\nonumber
\end{align}
holds for all centrally symmetric convex domains, where equality is attained precisely for the ellipses. In 1950, I. F\'{a}ry \cite{Fary} proved that
\begin{align}
\theta^l(K)\leq\frac{3}{2}\nonumber
\end{align}
holds for all convex domains, where equality is attained precisely for the triangles. It is trivial that $\theta^c(T_2)=1$. However, the fact
\begin{align}
\theta^t(T_2)=\frac{3}{2}\nonumber
\end{align}
was proved only in 2010 by J. Januszewski \cite{Januszewski}. More surprisingly, some basic covering problems remain open even in the plane (see \cite{Zong}). For example, we do not yet know if $\theta^t(K)=\theta^l(K)$ holds for all convex domains.

In $\mathbb{E}^3$, except for the five types of parallelohedra which can translatively tile $\mathbb{E}^3$, the only known exact result is
\begin{align}
\theta^l(B_3)=\frac{5\sqrt{5}\pi}{24},\nonumber
\end{align}
which was discovered in 1954 by R. P. Bambah \cite{Bambah}. For tetrahedron coverings, several bounds have been achieved. In the lattice case,
\begin{align}
\frac{2^{16}+1}{2^{16}}\leq\theta^l(T_3)\leq\frac{125}{63},\nonumber
\end{align}
where the upper bound was discovered by C. M. Fiduccia, R. W. Forcade and J. S. Zito \cite{Fiduccia}, R. Dougherty and V. Faber \cite{Dougherty} and R. Forcade and J. Lamoreaux \cite{Forcade} in 1990s, and the lower bound was achieved by F. Xue and C. Zong \cite{Xue} in 2018 by studying the volumes of generalized difference bodies. In 2022, M. Fu, F. Xue and C. Zong \cite{Fu} improved the lower bound to
\begin{align}
\theta^l(T_3)\geq\frac{25}{18},\nonumber
\end{align}
and they also obtained that
\begin{align}
\theta^l(T_4)\geq\frac{343}{264}.\nonumber
\end{align}
In the congruent case, J. H. Conway and S. Torquato \cite{Conway} obtained
\begin{align}
\theta^c(T_3)\leq\frac{9}{8}\nonumber
\end{align}
by constructing a particular tetrahedron covering in 2006. It is surprising that, nothing nontrivial is known about $\theta^t(T_3)$ up to now.

In $\mathbb{E}^n$, from the works of R. P. Bambah, H. S. M. Coxeter, H. Davenport, P. Erd\H{o}s, L. Few, G. L. Watson, and in particular C. A. Rogers (see \cite{RogersB}), we know that
\begin{gather}
\theta^l(K)\leq n^{\log_2\log_e n+c}, \notag\\
\theta^t(K)\leq n\log n+n\log \log n+5n, \notag\ \textrm{and} \\
\frac{n}{e\sqrt{e}}\ll \theta^t(B_n)\leq \theta^l(B_n)\leq c\cdot n(\log_e n)^{\frac{1}{2}\log_2 2\pi e}.\notag
\end{gather}
Since the 1960s, progress in covering is very limited (see \cite{Brass}). In 2018, F. Xue and C. Zong \cite{Xue} obtained that
\begin{align}
\theta^l(T_n)\geq 1+\frac{1}{2^{3n+7}}. \nonumber
\end{align}
Recently, M. Fu, F. Xue and C. Zong \cite{Fu} improved the lower bound to
\begin{align}
\theta^l(T_n)>1+\frac{1}{n}-\frac{1}{(n-1)2^{n-1}}.\nonumber
\end{align}
In 2021, O. Ordentlich, O. Regev and B. Weiss \cite{Ordentlich} improved Rogers' upper bound to
\begin{align}
\theta^l(K)\leq cn^2, \nonumber
\end{align}
where $c$ is a suitable positive constant.

Since $\theta^t(K)$ is invariant under non-singular linear transformations on $K$, we will work on the regular tetrahedron $T$ with vertices $(1, 1, -1)$, $(1, -1, 1)$, $(-1, 1, 1)$, $(-1, -1, -1)$, edge length $2\sqrt{2}$ and volume $\frac{8}{3}$ in this paper. We prove the following result:
\begin{thm}\label{density}
If $T+X$ is a translative covering of $\mathbb{E}^3$, then its density is at least $1+\frac{\sqrt{2}}{1152}$. In other words, we have
\begin{align}
\theta^t(T)\geq1+\frac{\sqrt{2}}{1152}>1+1.227\times10^{-3}.\nonumber
\end{align}
\end{thm}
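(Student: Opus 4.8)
The plan is to convert the global covering density into an averaged local overlap and then to exhibit a definite amount of forced multiple covering inside a reference cube. Since the translative covering density equals the space average $\langle f\rangle$ of the multiplicity function $f(y)=\sum_{x\in X}\mathbf 1_{T+x}(y)$, and since $f\ge 1$ almost everywhere, it suffices to show $\langle f-1\rangle\ge\frac{\sqrt2}{1152}$, i.e. that the overlap region (where $f\ge 2$), counted with multiplicity, has positive density. I would fix the cube $Q=[-1,1]^3$ circumscribing $T$, whose $8$ vertices split into the four vertices of $T$ and the four vertices of the complementary tetrahedron, and use the decomposition $Q=T\sqcup(C_1\cup C_2\cup C_3\cup C_4)$ into $T$ and four congruent corner orthoschemes. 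Because $Q+2\mathbb Z^3$ tiles $\mathbb E^3$, averaging the overlap over one such cube computes $\langle f-1\rangle$, so the whole problem reduces to a uniform lower bound on the overlap captured by a single reference cube.

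The mechanism forcing overlap is the rigidity coming from the common orientation of all translates. Every copy $T+x$ has its four faces in the four fixed normal directions $(1,1,1),(1,-1,-1),(-1,1,-1),(-1,-1,1)$; in particular each copy has a unique top face with outward normal $(1,1,1)$, lying in the plane $\ell=\ell(x)+1$ for the linear form $\ell(y)=y_1+y_2+y_3$, and is contained in the half-space $\ell\le\ell(x)+1$. Slicing $\mathbb E^3$ by the planes $\Pi_c=\{\ell=c\}$, the section of each copy is a \emph{positively homothetic} (same-orientation) triangle, similar to the face $ABC$ and scaled by $\tfrac14\bigl(c-\ell(x)+3\bigr)\in[0,1]$. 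Thus covering each $\Pi_c$ is a planar covering by same-orientation triangles, and the three-dimensional overlap is the integral over $c$ of the planar overlap on $\Pi_c$. The key point is that same-orientation triangles cannot tile a plane when their sizes are bounded away from $0$; and although unboundedly many scales could in principle tile it, here the sizes are \emph{coupled}, since as $c$ increases every section grows at the same linear rate and each copy vanishes the instant $c$ passes its top face. I would show that this coupling is incompatible with a zero-overlap section at almost every level across a nondegenerate range of $c$.

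To extract the explicit constant I would localize the unavoidable overlap to the neighborhood of the top faces and bottom apices of the copies. Just above a top face $ABC+x$ the half-space $\ell>\ell(x)+1$ is uncovered by $T+x$ and must be filled by other copies whose sections there are triangles of the same orientation; tracking how these neighboring copies must protrude in the fixed face directions pins down a small simplex of guaranteed multiple covering attached to each top face, equivalently to each corner orthoscheme $C_i$ of the reference cube. Computing the volume of this forced overlap and dividing by $\mathrm{vol}(Q)=8$, together with the average count of top faces and apices incident to one reference cube, should yield the excess density $\frac{\sqrt2}{1152}$ and hence $\theta^t(T)\ge 1+\frac{\sqrt2}{1152}$.

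The main obstacle is the quantitative rigidity step: merely knowing that same-orientation triangles cannot tile is not enough, because with many scales the planar overlap can be made arbitrarily small, so I must use the linear coupling across levels to rule out near-tilings and produce a \emph{uniform} positive lower bound. Making the localization rigorous is equally delicate, since the forced-overlap pieces must be assigned to distinct copies, or to distinct reference cubes, without double counting, and the averaging must survive the boundary effects of a finite covering window. I expect the heart of the argument to be a careful case analysis of how the four fixed face directions constrain the copies bordering a given top face, turning the orientation rigidity into an explicit, non-vanishing volume of overlap.
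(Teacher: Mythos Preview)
Your proposal is a strategy outline rather than a proof, and the part you flag as ``the main obstacle'' is in fact the entire content of the theorem. You correctly observe that slices of the covering by the planes $\{y_1+y_2+y_3=c\}$ are coverings of $\mathbb{E}^2$ by positively homothetic triangles, and that such triangles cannot tile; but as you yourself note, with many scales the planar overlap can be made arbitrarily small, so this observation alone yields no positive constant. Your subsequent claims---that the coupling across levels rules out near-tilings, that a definite overlap simplex is ``attached to each top face'', and that the resulting computation ``should yield'' exactly $\frac{\sqrt{2}}{1152}$---are assertions without mechanism. No case analysis is described, no forced-overlap region is actually identified, and no calculation is performed. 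As it stands the argument establishes only the qualitative fact $\theta^t(T)>1$, which was already known.

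The paper's proof is structurally quite different and avoids slicing altogether. It fixes one tetrahedron $T+\mathbf{x}_{m+1}$ of the covering and looks at the $m$ translates meeting it. If $m\ge 63$ a crude volume count inside the containing region $D$ already gives density $\ge\frac{193}{192}$. If $m\le 62$, the key input is the Rogers--Shephard characterization: each intersection $(T+\mathbf{x}_{m+1})\cap(T+\mathbf{x}_i)$ is a positive homothet $\lambda_i T+\mathbf{y}_i$. These homothets meet $\partial(T+\mathbf{x}_{m+1})$ in at most $t\le 2m+4\le 128$ congruent triangles whose total area is at least the surface area $8\sqrt{3}$; the power-mean inequality then converts the area bound $\sum(\text{weighted})\lambda_i^2\ge 32$ into a volume bound $\sum\lambda_i^3\ge\frac{16}{3}$, which is precisely the overlap volume. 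Dividing by $\mathrm{vol}(P)=512$ and multiplying by $\mathrm{vol}(T)$ per unit $\lambda^3$ gives the constant $\frac{\sqrt{2}}{1152}$. The two ingredients you are missing are the Rogers--Shephard homothety (which replaces your vague ``rigidity'' by an exact shape for every pairwise intersection) and the power-mean step (which turns a surface-covering constraint into a volume-overlap lower bound without any localization or case analysis near faces).
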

\section{Geometric Lemmas}
Let $K$ be a convex body and define
\begin{align}
D(K)=\{\mathbf{x}-\mathbf{y}: \mathbf{x}, \mathbf{y}\in K\}. \nonumber
\end{align}
Usually, we call $D(K)$ the \emph{difference body} of $K$. Clearly $D(K)$ is a centrally symmetric convex set centered at the origin $\mathbf{o}$.
\begin{ex}\textbf{(Groemer \cite{Groemer}).} \label{D(2T)Groemer}
Let $2T$ denote the tetrahedron with vertices $(2, 2, -2)$, $(2, -2, 2)$, $(-2, 2, 2)$ and $(-2, -2, -2)$. It is well known that
\begin{align}
D(2T)=\{(x,y,z): \max\{|x|, |y|, |z|\}\leq4, |x|+|y|+|z|\leq8\}, \nonumber
\end{align}
i.e., a cuboctahedron with edge length $4\sqrt{2}$ and volume $\frac{1280}{3}$.
\end{ex}

Let $P$ denote the cube (see Fig. \ref{fig1}) defined by
\begin{align}\label{defineP}
\{(x,y,z): \max\{|x|, |y|, |z|\}\leq4\}.
\end{align}
Clearly,
\begin{align}\label{vol(P)}
D(2T)\subset P\ \textrm{and}\ vol(P)=512.
\end{align}

\begin{figure}[H]
\centering
\includegraphics[height=3.5cm]{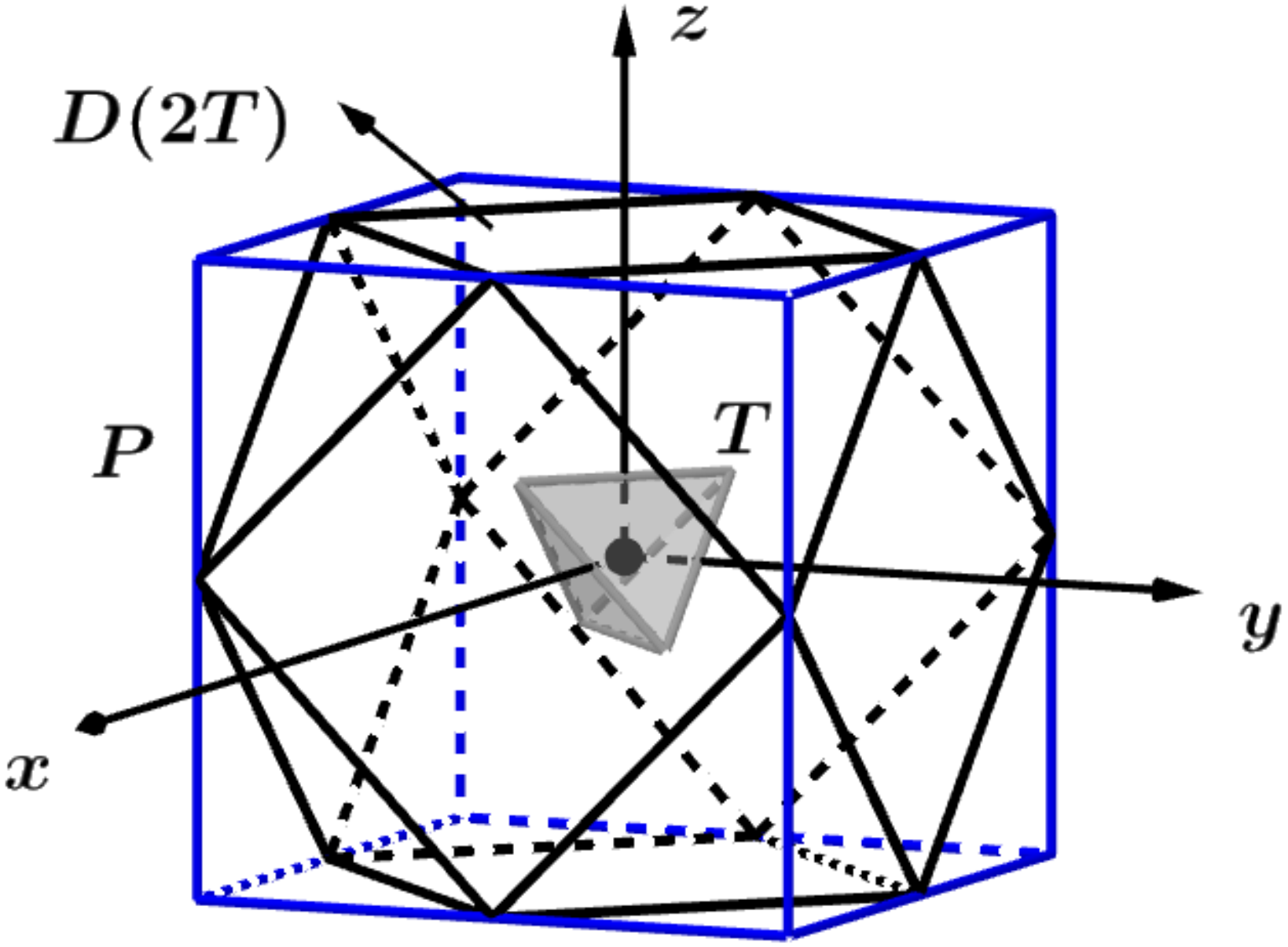}
\caption{The cuboctahedron $D(2T)$ contained in the cube $P$.}
\label{fig1}
\end{figure}

\begin{lem}\label{D(2T)}
If $\mathbf{o}\in T+\mathbf{x}$ and $(T+\mathbf{x})\cap(T+\mathbf{y})\neq\varnothing$, then $T+\mathbf{y}\subset D(2T)$.
\end{lem}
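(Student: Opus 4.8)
The plan is to recast both hypotheses as membership statements for difference bodies and then reduce the claim to a single Minkowski-sum identity. First I would observe that $\mathbf{o}\in T+\mathbf{x}$ is equivalent to $-\mathbf{x}\in T$, i.e.\ $\mathbf{x}\in -T$, where $-T$ denotes the reflection of $T$ in the origin $\mathbf{o}$. Next, the condition $(T+\mathbf{x})\cap(T+\mathbf{y})\neq\varnothing$ means there exist points $\mathbf{a},\mathbf{b}\in T$ with $\mathbf{a}+\mathbf{x}=\mathbf{b}+\mathbf{y}$; rearranging gives $\mathbf{y}-\mathbf{x}=\mathbf{a}-\mathbf{b}\in D(T)$. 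Thus the two hypotheses say precisely that $\mathbf{x}\in -T$ and $\mathbf{y}-\mathbf{x}\in D(T)$.

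With these in hand, I would take an arbitrary point $\mathbf{p}\in T+\mathbf{y}$ and write $\mathbf{p}=\mathbf{c}+\mathbf{y}$ for some $\mathbf{c}\in T$. Inserting $\mathbf{x}$ artificially,
\begin{align}
\mathbf{p}=\mathbf{c}+(\mathbf{y}-\mathbf{x})+\mathbf{x},\nonumber
\end{align}
which exhibits $\mathbf{p}$ as a sum of a point of $T$, a point of $D(T)$, and a point of $-T$. Hence $\mathbf{p}\in T+D(T)+(-T)$, and since $\mathbf{p}$ was arbitrary, $T+\mathbf{y}\subset T+D(T)+(-T)$.

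It then remains to identify the Minkowski sum on the right. Using associativity and commutativity of Minkowski addition together with $T+(-T)=T-T=D(T)$, one obtains $T+D(T)+(-T)=D(T)+D(T)$. Because $D(T)$ is convex, $D(T)+D(T)=2D(T)$, and by homogeneity of the difference body $2D(T)=D(2T)$ (indeed $D(2T)=2T-2T=2(T-T)=2D(T)$). Combining these gives $T+\mathbf{y}\subset D(2T)$, as claimed.

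I do not expect a serious obstacle here: the argument is elementary once phrased through difference bodies, and the only points that warrant care are the two identities $D(T)+D(T)=2D(T)$ (which uses convexity) and $2D(T)=D(2T)$ (homogeneity of $D(\cdot)$). The explicit cuboctahedron description of $D(2T)$ from Example \ref{D(2T)Groemer} is not needed for this lemma itself, though it will presumably enter the later density computations relative to the cube $P$.
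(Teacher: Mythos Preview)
Your proof is correct and is essentially the same as the paper's: both extract $\mathbf{x}\in -T$ from the first hypothesis, use the intersection to place the relevant displacement in $D(T)$, and then conclude via the Minkowski-sum identity $T+(-T)+D(T)=D(T)+D(T)=D(2T)$. The paper phrases the middle step through a common point $\mathbf{z}\in(T+\mathbf{x})\cap(T+\mathbf{y})$ rather than through $\mathbf{y}-\mathbf{x}\in D(T)$, but this is only a notational difference.
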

\begin{proof}
Since $\mathbf{o}\in T+\mathbf{x}$, $\mathbf{x}\in -T$. Then
\begin{align}
T+\mathbf{x}\subset T-T=D(T). \nonumber
\end{align}
Without loss of generality, suppose that $(T+\mathbf{x})\cap(T+\mathbf{y})=\mathbf{z}$. Since $\mathbf{z}\in T+\mathbf{x}\subset$ $D(T)$ and $\mathbf{z}\in T+\mathbf{y}$, we have $\mathbf{y}\in -T+\mathbf{z}\subset -T+D(T)$ and therefore
\begin{align}
T+\mathbf{y}\subset T-T+D(T)=D(2T).\nonumber
\end{align}
The lemma is proved.
\end{proof}

\begin{lem}\label{5T}
If $T\cap(T+\mathbf{x})\neq\varnothing$, then $T+\mathbf{x}\subset 5T\cap-7T$.
\end{lem}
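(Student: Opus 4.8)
The plan is to reduce the assertion to a single containment between homothets of $T$, namely $-T\subseteq 3T$, and then to propagate it through Minkowski sums. First I would translate the hypothesis into a statement about $\mathbf{x}$. If $T\cap(T+\mathbf{x})\neq\varnothing$, pick a point $\mathbf{z}$ in the intersection; then $\mathbf{z}\in T$ and $\mathbf{z}-\mathbf{x}\in T$, so $\mathbf{x}=\mathbf{z}-(\mathbf{z}-\mathbf{x})\in T-T=D(T)$. Consequently, using that $T$ is convex (so $T+T=2T$) and that the Minkowski sum is associative,
\begin{align}
T+\mathbf{x}\subseteq T+(T-T)=2T+(-T).\nonumber
\end{align}
Thus everything comes down to understanding the polytope $2T+(-T)$.

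The key geometric input is the inclusion $-T\subseteq 3T$, equivalently $T\subseteq -3T$. Since the centroid of $T$ is the origin $\mathbf{o}$, both $3T$ and $-3T$ are honest dilations about $\mathbf{o}$. I would verify $-T\subseteq 3T$ by checking the four vertices of $-T$, namely $(1,1,1)$, $(1,-1,-1)$, $(-1,1,-1)$, $(-1,-1,1)$, against the facet inequalities of $T$ scaled by $3$. Here I use the representation $T=\{(x,y,z): x+y+z\leq 1,\ x-y-z\leq 1,\ -x+y-z\leq 1,\ -x-y+z\leq 1\}$, so that $3T$ is cut out by the same four inequalities with right-hand side $3$; each vertex of $-T$ saturates exactly one of them and satisfies the other three strictly, which gives the inclusion. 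Equivalently, one may argue with support functions: writing $a_i=\langle\mathbf{v}_i,\mathbf{u}\rangle$ for the four vertices $\mathbf{v}_i$ of $T$, the centroid relation $\sum_i a_i=0$ forces $-\min_i a_i\leq 3\max_i a_i$, which is precisely $h_{-T}(\mathbf{u})\leq 3h_T(\mathbf{u})$.

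Finally I would combine these facts. Adding $2T$ to $-T\subseteq 3T$ gives
\begin{align}
2T+(-T)\subseteq 2T+3T=5T,\nonumber
\end{align}
and scaling $T\subseteq -3T$ by $2$ gives $2T\subseteq -6T$, whence
\begin{align}
2T+(-T)\subseteq -6T+(-T)=-7T.\nonumber
\end{align}
Together with the first step this yields $T+\mathbf{x}\subseteq 5T\cap -7T$, as claimed.

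I expect the only real content to lie in the containment $-T\subseteq 3T$, i.e.\ in recognizing that the asymmetry constant of a tetrahedron equals its dimension $3$; the Minkowski-sum bookkeeping and the identities $2T+3T=5T$ and $2T\subseteq -6T$ are routine once one keeps track of the fact that all the dilations involved are centered at the centroid $\mathbf{o}$ of $T$.
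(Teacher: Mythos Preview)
Your proof is correct and rests on the same geometric fact as the paper's, namely $-T\subseteq 3T$ (equivalently $-\tfrac{1}{3}\mathbf{u}\in T$ for every $\mathbf{u}\in T$); the paper establishes this via barycentric coordinates and then writes each point $\mathbf{u}+\mathbf{u}_2-\mathbf{u}_1$ explicitly as $5$ times (respectively $-7$ times) a convex combination of points of $T$, whereas you package the identical computation as the Minkowski-sum inclusions $2T+(-T)\subseteq 2T+3T=5T$ and $2T+(-T)\subseteq -6T-T=-7T$. The two arguments are essentially the same, yours being a slightly more streamlined rephrasing.
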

\begin{proof}
We claim that $-\frac{1}{3}\mathbf{u}\in T$ if $\mathbf{u}\in T$. Let $\mathbf{v}_1, \mathbf{v}_2, \mathbf{v}_3$ and $\mathbf{v}_4$ denote the vertices of $T$. For any point $\mathbf{u}\in T$, we have
\begin{align}
\mathbf{u}=\alpha_1\mathbf{v}_1+\alpha_2\mathbf{v}_2+\alpha_3\mathbf{v}_3+\alpha_4\mathbf{v}_4, \nonumber
\end{align}
where $\alpha_i\geq0$ for all $i$ and $\alpha_1+\alpha_2+\alpha_3+\alpha_4=1$. Without loss of generality, suppose that $\alpha_4=\max\{\alpha_1, \alpha_2, \alpha_3, \alpha_4\}$. Since $\mathbf{v}_1+\mathbf{v}_2+\mathbf{v}_3+\mathbf{v}_4=\mathbf{o}$, we have
\begin{align}
-\frac{1}{3}\mathbf{u}&=-\frac{1}{3}\left(\alpha_1\mathbf{v}_1+\alpha_2\mathbf{v}_2+\alpha_3\mathbf{v}_3+\alpha_4(-\mathbf{v}_1-\mathbf{v}_2-\mathbf{v}_3)\right) \nonumber\\
&=\frac{1}{3}(\alpha_4-\alpha_1)\mathbf{v}_1+\frac{1}{3}(\alpha_4-\alpha_2)\mathbf{v}_2+\frac{1}{3}(\alpha_4-\alpha_3)\mathbf{v}_3.\nonumber
\end{align}
Since $\frac{1}{3}(\alpha_4-\alpha_i)\geq0$ for all $i$ and the sum of them $\leq1$, combined with the convexity of $T$, we have $-\frac{1}{3}\mathbf{u}\in T$.

Since $T\cap(T+\mathbf{x})\neq\varnothing$, there exist $\mathbf{u}_1, \mathbf{u}_2\in T$ such that $\mathbf{u}_1+\mathbf{x}=\mathbf{u}_2$. Then we have $-\frac{1}{3}\mathbf{u}_1\in T$, $-\frac{1}{3}\mathbf{u}_2\in T$. For any point $\mathbf{u}\in T$, we have $-\frac{1}{3}\mathbf{u}\in T$. By the convexity of $T$, we know that
\begin{gather}
\mathbf{u}+\mathbf{x}=\mathbf{u}+\mathbf{u}_2-\mathbf{u}_1=5\left(\frac{1}{5}\mathbf{u}+\frac{1}{5}\mathbf{u}_2+\frac{3}{5}\left(-\frac{1}{3}\mathbf{u}_1\right)\right)\in 5T, \notag\\
\mathbf{u}+\mathbf{x}=\mathbf{u}+\mathbf{u}_2-\mathbf{u}_1=-7\left(\frac{3}{7}\left(-\frac{1}{3}\mathbf{u}\right)+\frac{3}{7}\left(-\frac{1}{3}\mathbf{u}_2\right)+\frac{1}{7}\mathbf{u}_1
\right)\in -7T.\notag
\end{gather}
Therefore, $T+\mathbf{x}\subset 5T\cap-7T$ if $T\cap(T+\mathbf{x})\neq\varnothing$, the lemma is proved.
\end{proof}

\begin{figure}[H]
\centering
\includegraphics[height=4.5cm]{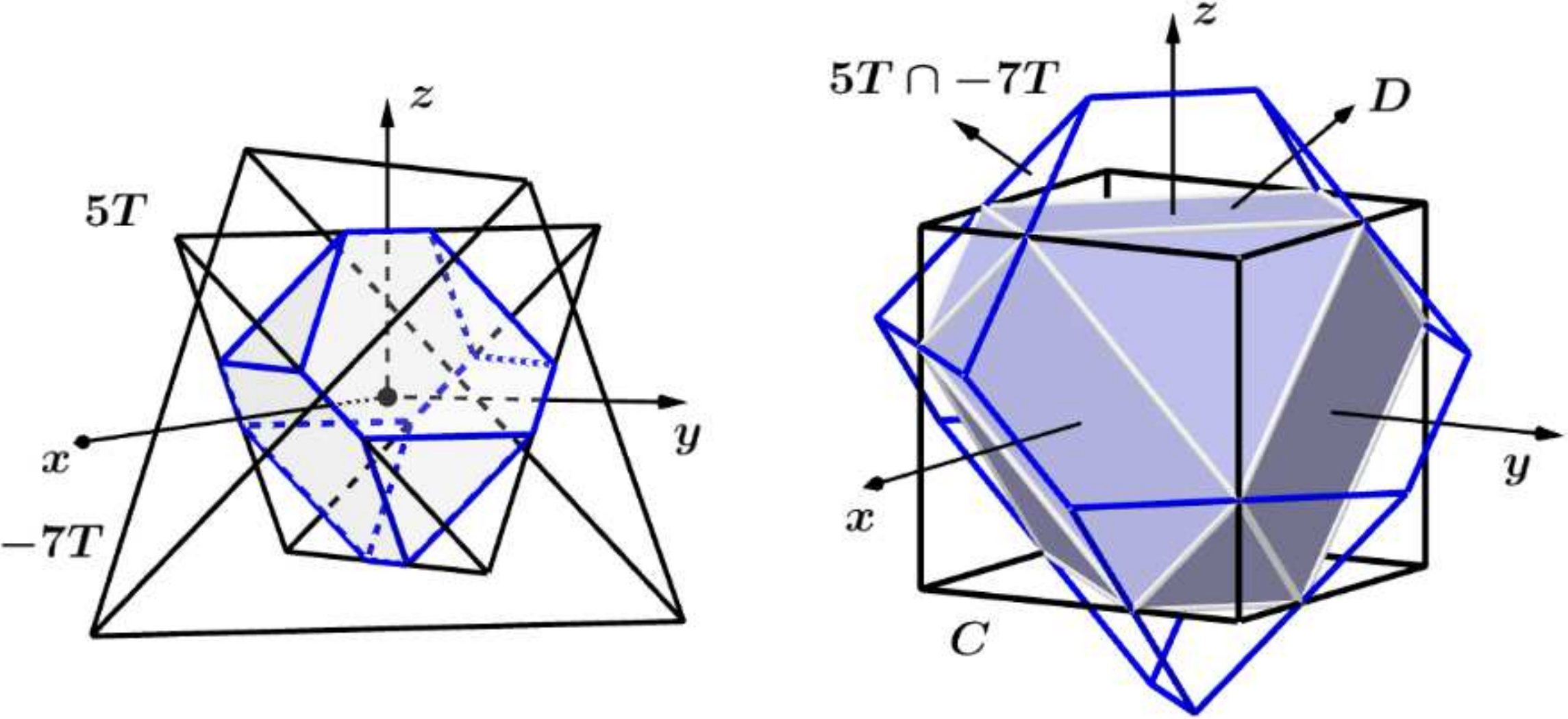}
\caption{$5T\cap-7T$ and $D$.}
\label{fig2}
\end{figure}

Let $C$ denote the cube defined by
\begin{align}
\{(x,y,z): \max\{|x|, |y|, |z|\}\leq3\}, \nonumber
\end{align}
and let
\begin{align}
D=5T\cap-7T\cap C. \nonumber
\end{align}
As shown in Fig. \ref{fig2}, we know that
\begin{align}\label{vol(D)}
vol(D)=vol(C)-4\cdot\frac{4}{3}-4\cdot\frac{32}{3}=168.
\end{align}
The following lemma holds:
\begin{lem}\label{D}
If $T\cap(T+\mathbf{x})\neq\varnothing$, then $T+\mathbf{x}\subset D$.
\end{lem}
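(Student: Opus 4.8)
The plan is to build directly on Lemma~\ref{5T}, which already furnishes the two harder inclusions. Since $D = 5T \cap -7T \cap C$, and Lemma~\ref{5T} gives $T + \mathbf{x} \subset 5T \cap -7T$ whenever $T \cap (T+\mathbf{x}) \neq \varnothing$, it remains only to verify the single additional containment $T + \mathbf{x} \subset C$. Intersecting this with the conclusion of Lemma~\ref{5T} then yields $T + \mathbf{x} \subset 5T \cap -7T \cap C = D$, as desired.

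To establish $T + \mathbf{x} \subset C$, I would first record that $T$ is inscribed in the unit cube $\{(x,y,z): \max\{|x|,|y|,|z|\} \leq 1\}$: each of its four vertices has all coordinates equal to $\pm 1$, so every point of $T$, being a convex combination of these vertices, has each coordinate in $[-1,1]$. Next I would bound the translation vector. The hypothesis $T \cap (T+\mathbf{x}) \neq \varnothing$ means exactly that $\mathbf{x} \in T - T = D(T)$. Writing $\mathbf{x} = (x_1,x_2,x_3)$, the required bound can either be read off from Groemer's description in Example~\ref{D(2T)Groemer} (since $D(T) = \tfrac{1}{2}D(2T)$, and $D(2T)\subset P$ forces $\max\{|x_1|,|x_2|,|x_3|\}\leq 2$) or obtained directly: choosing a common point $\mathbf{p} \in T \cap (T+\mathbf{x})$, both $\mathbf{p}$ and $\mathbf{p} - \mathbf{x}$ lie in $T$, hence in the unit cube, so $|x_i| \leq |p_i| + |p_i - x_i| \leq 2$ for $i = 1,2,3$.

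Combining the two estimates finishes the containment: for any $\mathbf{u} = (u_1,u_2,u_3) \in T$ we have $|u_i| \leq 1$ and $|x_i| \leq 2$, whence
\begin{align}
|u_i + x_i| \leq |u_i| + |x_i| \leq 3 \nonumber
\end{align}
for each $i$, so $\mathbf{u} + \mathbf{x} \in C$. Thus $T + \mathbf{x} \subset C$, and the lemma follows. I do not anticipate a genuine obstacle here: once Lemma~\ref{5T} is in hand, the only new content is the coordinate bound $\max\{|x_1|,|x_2|,|x_3|\} \leq 2$ on the translation vector, which is immediate from the fact that $T$ sits inside the unit cube. The one mild point worth checking is that $|u_i + x_i| \leq 3$ is attained with equality only on the boundary of $C$, so the (closed) inclusion $T + \mathbf{x} \subset C$ still holds; the volume identity \eqref{vol(D)} is separate bookkeeping and is not needed for the inclusion itself.
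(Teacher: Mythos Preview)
Your proposal is correct and follows essentially the same route as the paper: both reduce to Lemma~\ref{5T} for the inclusions in $5T$ and $-7T$, and both establish $T+\mathbf{x}\subset C$ via coordinate bounds. The only cosmetic difference is that the paper obtains $|u_i + x_i|\le 3$ by adding suitable pairs of the four defining half-space inequalities of $T$ directly, whereas you first pass through the observation $T\subset[-1,1]^3$ and then apply the triangle inequality; the two arguments are equivalent.
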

\begin{proof}
We know that $T$ can be defined as the set of all points $(x, y, z)$ where
\begin{equation}\label{T=}
\begin{aligned}
x+y+z\leq 1 \\
-x-y+z\leq 1\\
x-y-z\leq 1\\
-x+y-z\leq 1
\end{aligned}
\end{equation}
holds. Since $T\cap(T+\mathbf{x})\neq\varnothing$, there exist $(x_1,y_1,z_1)\in T$ and $(x_2,y_2,z_2)\in T$ such that
\begin{align}
\mathbf{x}=(x_2,y_2,z_2)-(x_1,y_1,z_1). \nonumber
\end{align}
For any point $(x,y,z)\in T$, inserting  $(x,y,z)$ into the two inequalities in (\ref{T=}), $(x_2,y_2,z_2)$ as well, inserting $(x_1,y_1,z_1)$ the remaining two inequalities, and then adding all inequalities together, we have
\begin{align}
|x+(x_2-x_1)|\leq3,\ |y+(y_2-y_1)|\leq3,\ |z+(z_2-z_1)|\leq3.\nonumber
\end{align}
Therefore, $T+\mathbf{x}\subset C$ if $T\cap(T+\mathbf{x})\neq\varnothing$. By Lemma \ref{5T}, $T+\mathbf{x}\subset 5T\cap-7T\cap C$ if $T\cap(T+\mathbf{x})\neq\varnothing$, the lemma is proved.
\end{proof}
\begin{rem}\label{=D}
In fact, by similar arguments it can be deduced that
\begin{align}
\bigcup(T+\mathbf{x})=D,\nonumber
\end{align}
where $T\cap(T+\mathbf{x})\neq\varnothing$. Then by Lemma \ref{D(2T)} and (\ref{vol(P)}),
\begin{align}\label{relation}
\bigcup(T+\mathbf{y})=\bigcup(D+\mathbf{x})\subset D(2T)\subset P,
\end{align}
where $\mathbf{o}\in T+\mathbf{x}$ and $(T+\mathbf{x})\cap(T+\mathbf{y})\neq\varnothing$.
\end{rem}

To prove Theorem \ref{density}, we need the following two results.
\begin{lem}\textbf{(Rogers and Shephard \cite{Rogers}).} \label{homothetic}
An $n$-dimensional convex body $K$ is a simplex if and only if, for any $\mathbf{x}\in int(D(K))$, the intersection $K\cap(K+\mathbf{x})$ is positively homothetic to $K$.
\end{lem}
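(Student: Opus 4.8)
This is the Rogers--Shephard characterisation of simplices; although we use it only as a tool, let me indicate how its proof would go, splitting it into an easy ``only if'' direction and a substantial ``if'' direction. For the ``only if'' direction I would compute directly in barycentric coordinates. Write the simplex as $K=\{\mathbf{y}:\lambda_i(\mathbf{y})\ge 0,\ i=0,\dots,n\}$, where $\lambda_i(\mathbf{y})=\langle\mathbf{a}_i,\mathbf{y}\rangle+w_i$ are the affine barycentric coordinates, so that $\sum_i\lambda_i\equiv 1$ and hence $\sum_i\mathbf{a}_i=\mathbf{o}$. Since $\lambda_i(\mathbf{y}-\mathbf{x})=\lambda_i(\mathbf{y})-t_i$ with $t_i:=\langle\mathbf{a}_i,\mathbf{x}\rangle$ and $\sum_i t_i=0$, we get $K+\mathbf{x}=\{\mathbf{y}:\lambda_i(\mathbf{y})\ge t_i\}$, and therefore $K\cap(K+\mathbf{x})=\{\mathbf{y}:\lambda_i(\mathbf{y})\ge\max(0,t_i)\}$. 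Substituting $\mu_i=\lambda_i-\max(0,t_i)\ge 0$, with $\sum_i\mu_i=1-\sum_i\max(0,t_i)$, exhibits this set as $\rho K+\mathbf{c}$ of ratio $\rho=1-\sum_i\max(0,t_i)$, which is positive precisely when $\mathbf{x}\in int(D(K))$. This is a positive homothet of $K$, as required.

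For the ``if'' direction the plan is to show that the homothety hypothesis rigidly pins down the facial structure of $K$. Write $K\cap(K+\mathbf{x})=\rho(\mathbf{x})\,K+\mathbf{c}(\mathbf{x})$ and let $h_K(\mathbf{u})=\max_{\mathbf{y}\in K}\langle\mathbf{u},\mathbf{y}\rangle$, with $\sigma_K(\mathbf{u})$ the corresponding support point. First I would argue that $K$ is a polytope: on any open set of directions $\mathbf{u}$ for which the supporting hyperplane of the intersection is inherited from $K$ alone, the support point equals both $\sigma_K(\mathbf{u})$ and $\rho\,\sigma_K(\mathbf{u})+\mathbf{c}$, and since $\rho<1$ this forces $\sigma_K(\mathbf{u})$ to be constant there, i.e. an exposed point of $K$ with a full-dimensional normal cone. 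Letting $\mathbf{x}$ vary shows $\partial K$ contains no smooth arc, so $K$ is a polytope, say with facet outer normals $\mathbf{n}_1,\dots,\mathbf{n}_N$ and heights $b_j=h_K(\mathbf{n}_j)$.

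For small $\mathbf{x}$ every $\mathbf{n}_j$ still determines a facet of the intersection, and comparing $K\cap(K+\mathbf{x})=\{\mathbf{y}:\langle\mathbf{n}_j,\mathbf{y}\rangle\le b_j+\min(0,\langle\mathbf{n}_j,\mathbf{x}\rangle)\}$ with $\rho K+\mathbf{c}$ yields, for each $j$, the scalar identity $\langle\mathbf{c},\mathbf{n}_j\rangle-(1-\rho)b_j=\min(0,\langle\mathbf{n}_j,\mathbf{x}\rangle)$. Introducing the lifted normals $\tilde{\mathbf{n}}_j=(\mathbf{n}_j,-b_j)\in\mathbb{E}^{n+1}$ and the unknown $\mathbf{w}=(\mathbf{c},1-\rho)$, these read $\langle\tilde{\mathbf{n}}_j,\mathbf{w}\rangle=\min(0,\langle\mathbf{n}_j,\mathbf{x}\rangle)$ and must be solvable for every small $\mathbf{x}$. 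Hence every linear dependence $\sum_j\gamma_j\tilde{\mathbf{n}}_j=\mathbf{0}$ must annihilate the right-hand side, i.e. $\sum_j\gamma_j\min(0,\langle\mathbf{n}_j,\mathbf{x}\rangle)\equiv 0$. Writing $\min(0,t)=\tfrac12(t-|t|)$ and using $\sum_j\gamma_j\mathbf{n}_j=\mathbf{o}$, the linear part drops out and this collapses to $\sum_j\gamma_j|\langle\mathbf{n}_j,\mathbf{x}\rangle|\equiv 0$.

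I would finish by invoking the linear independence of the functions $\mathbf{x}\mapsto|\langle\mathbf{n}_j,\mathbf{x}\rangle|$ attached to distinct normal directions. When $K$ has no pair of parallel facets these functions are pairwise distinct and independent, so the displayed identity forces every dependence $\gamma$ to vanish; thus the $\tilde{\mathbf{n}}_j$ are linearly independent and $N\le n+1$. Since the facet normals of a bounded polytope positively span $\mathbb{E}^n$, also $N\ge n+1$, giving $N=n+1$ and hence that $K$ is a simplex. The main obstacle is the case of \emph{parallel} facets: then $|\langle\mathbf{n}_j,\cdot\rangle|$ coincides on antipodal pairs, the independence argument only yields $\gamma_{\mathbf{n}}+\gamma_{-\mathbf{n}}=0$ on each pair, and one must separately exclude bodies such as the cube --- which satisfies the resulting dimension count yet fails the hypothesis, because it admits a dependence among the $\tilde{\mathbf{n}}_j$ that is not antisymmetric on the parallel pairs. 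Proving in general that every polytope with $N\ge n+2$ carries such a ``bad'' dependence is where the real work lies; the remainder is bookkeeping.
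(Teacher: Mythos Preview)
The paper does not prove this lemma at all: it is quoted verbatim as a classical result of Rogers and Shephard \cite{Rogers} and used as a black box in the proof of Theorem~\ref{density}. So there is no ``paper's own proof'' to compare your attempt against.

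On the merits of your sketch: the ``only if'' direction via barycentric coordinates is clean and complete. For the ``if'' direction, your strategy---first reduce to polytopes, then lift the facet normals to $\tilde{\mathbf n}_j=(\mathbf n_j,-b_j)$ and exploit solvability of the linear system in $(\mathbf c,1-\rho)$---is a reasonable line of attack, and the reduction to the identity $\sum_j\gamma_j\,|\langle\mathbf n_j,\mathbf x\rangle|\equiv 0$ is correct. But you have correctly located, and not closed, the real gap: when $K$ has antipodal facet normals the functions $|\langle\mathbf n_j,\cdot\rangle|$ are no longer independent, and you must show that \emph{some} dependence among the $\tilde{\mathbf n}_j$ fails the absolute-value identity. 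You verify this for the cube but then defer the general case as ``where the real work lies''. That is an honest assessment, yet it means the argument as written is a plan rather than a proof. The original Rogers--Shephard argument proceeds quite differently, via the volume functional $\mathbf x\mapsto\mathrm{vol}(K\cap(K+\mathbf x))^{1/n}$ and its concavity properties on $D(K)$, which sidesteps the parallel-facet bookkeeping entirely; if you want a self-contained proof, that route is shorter than completing yours.
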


\begin{cor}\label{triangle}
If $T\cap (T+\mathbf{x})\neq\varnothing$ and $\mathbf{x}\neq \mathbf{o}$, then the following holds:

(1) $T+\mathbf{x}$ intersects at most one vertex of $T$; If $T+\mathbf{x}$ intersects more than one edge (face) of $T$, then it must intersect the vertices (edges) of $T$.

(2) If $int(T+\mathbf{x})$ intersects one vertex (edge or face) of $T$, then $\alpha(T)\cap (T+\mathbf{x})$ is three (two or one) regular triangles with the same edge length.
\end{cor}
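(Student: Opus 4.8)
The plan is to reduce everything to the half-space description of $T$ in (\ref{T=}) together with the structural fact, furnished by Lemma \ref{homothetic}, that $T':=T\cap(T+\mathbf{x})$ is a regular tetrahedron positively homothetic to $T$. Write the four facet normals as $\mathbf{a}_1=(1,1,1)$, $\mathbf{a}_2=(-1,-1,1)$, $\mathbf{a}_3=(1,-1,-1)$, $\mathbf{a}_4=(-1,1,-1)$, so that $T=\{\mathbf{p}:\mathbf{a}_i\cdot\mathbf{p}\le1\}$. The two features I would lean on are $\mathbf{a}_1+\mathbf{a}_2+\mathbf{a}_3+\mathbf{a}_4=\mathbf{o}$ and the fact that any three of the $\mathbf{a}_i$ are linearly independent. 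Then $T\cap(T+\mathbf{x})=\{\mathbf{p}:\mathbf{a}_i\cdot\mathbf{p}\le1+\min(0,\mathbf{a}_i\cdot\mathbf{x})\}$, and I would classify the indices into $S^+,S^-,S^0$ by the sign of $\mathbf{a}_i\cdot\mathbf{x}$. Since $\mathbf{x}\ne\mathbf{o}$, the linear dependence $\sum\mathbf{a}_i=\mathbf{o}$ together with three-independence forces both $S^+$ and $S^-$ to be nonempty.

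The core bookkeeping step is to translate incidence of $T+\mathbf{x}$ with the features of $T$ into conditions on these sign sets. Using that the vertex $V_j$ opposite facet $j$ satisfies $\mathbf{a}_i\cdot V_j=1$ for $i\ne j$ and $\mathbf{a}_j\cdot V_j=-3$, one checks that $V_j\in T+\mathbf{x}$ iff $\mathbf{a}_i\cdot\mathbf{x}\ge0$ for all $i\ne j$, i.e. iff $S^-=\{j\}$ (the auxiliary bound $\mathbf{a}_j\cdot\mathbf{x}\ge-4$ is automatic from $\mathbf{x}\in D(T)$, compare Example \ref{D(2T)Groemer}). Parametrizing an edge $E_{pq}$ and a facet $F_p$ the same way, I would show that $T+\mathbf{x}$ meets $E_{pq}$ iff $\{p,q\}\cap S^-=\varnothing$, and meets $F_p$ iff $p\notin S^-$. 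Consequently the number of vertices met is $1$ when $|S^-|=1$ and $0$ otherwise, the number of edges met is $\binom{4-|S^-|}{2}$, and the number of facets met is $4-|S^-|$. Part (1) then follows at once: at most one vertex is met; ``more than one edge'' forces $|S^-|=1$, hence a vertex is met; and ``more than one face'' forces $|S^-|\le2$, hence at least one edge is met.

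For part (2) I would invoke the homothety. Since $T'=\lambda T+\mathbf{c}$ with $0<\lambda<1$, its four facets are congruent regular triangles of edge length $2\sqrt{2}\,\lambda$, and facet $i$ of $T'$ lies on the boundary of $T$ exactly when $\mathbf{a}_i\cdot\mathbf{x}\ge0$, i.e. $i\in S^+\cup S^0$ (here I read $\alpha(T)$ as the boundary $\partial T$ of $T$). Because $T'\subseteq T$, one has $\partial T\cap(T+\mathbf{x})=\partial T\cap T'$, which is precisely the union of the $4-|S^-|$ facets of $T'$ lying on $\partial T$. Reading the three hypotheses of part (2) as mutually exclusive (the lowest-dimensional face of $T$ reached by $\operatorname{int}(T+\mathbf{x})$ being a vertex, an open edge, or an open facet), the interior requirement makes the relevant $\mathbf{a}_i\cdot\mathbf{x}$ strictly positive and pins down $|S^-|=1,2,3$ respectively (with $S^0=\varnothing$ forced in the vertex case), giving $3$, $2$, or $1$ congruent regular triangles.

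The step I expect to be most delicate is the incidence-to-sign dictionary in the middle paragraph: handling the boundary indices in $S^0$ and carefully distinguishing ``meets a feature'' from ``meets the relative interior of a feature,'' so that the counts $\binom{4-|S^-|}{2}$ and $4-|S^-|$ come out exactly and the interior hypotheses of part (2) isolate $|S^-|=1,2,3$ cleanly. Once Lemma \ref{homothetic} supplies the regular-tetrahedron structure of $T'$ (hence equal-edge regular-triangle facets), the remaining work is linear bookkeeping rather than any genuinely new idea.
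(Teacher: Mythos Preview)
The paper does not prove Corollary \ref{triangle}; it is stated immediately after Lemma \ref{homothetic} as an evident consequence, with Figure \ref{fig3} serving as the only justification. Your sign-partition approach via $S^{+},S^{-},S^{0}$ is correct and supplies precisely the argument the paper leaves implicit. The incidence dictionary you set up---$V_{j}\in T+\mathbf{x}$ iff $S^{-}\subseteq\{j\}$ (hence $S^{-}=\{j\}$ since $S^{-}\neq\varnothing$), $T+\mathbf{x}$ meets the edge lying on facets $r,s$ iff $\{r,s\}\cap S^{-}=\varnothing$, and $T+\mathbf{x}$ meets $F_{p}$ iff $p\notin S^{-}$---checks out directly from (\ref{T=}), and the counts $\binom{4-|S^{-}|}{2}$ for edges and $4-|S^{-}|$ for facets follow at once, giving part (1).

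One small remark on part (2): since the facets of $T'=\lambda T+\mathbf{c}$ lying on $\partial T$ are exactly those indexed by $S^{+}\cup S^{0}$, the number of boundary triangles is always $4-|S^{-}|$, irrespective of whether $S^{0}$ is empty. The interior hypothesis in the statement is therefore only doing the work of forcing the relevant inequalities to be strict so that the three cases (vertex, edge-but-not-vertex, face-but-not-edge) are cleanly separated as $|S^{-}|=1,2,3$, exactly as you identify in your last paragraph. Your reading of $\alpha(T)$ as $\partial T$ matches the way the corollary is used in the proof of Theorem \ref{density}.
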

\begin{figure}[H]
\centering
\includegraphics[height=2cm]{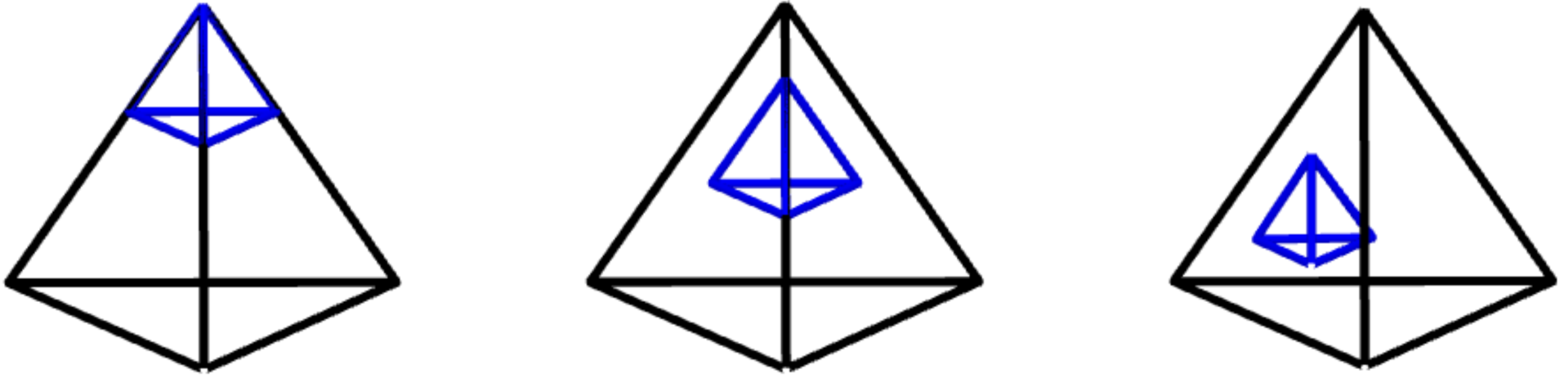}
\caption{Results for Corollary \ref{triangle}(2).}
\label{fig3}
\end{figure}
\section{Translative coverings of tetrahedra}
To study $\theta^t(T)$, the most natural approach is localization. Assume that $X$ is a discrete set of points in $\mathbb{E}^3$ such that $T+X$ is a translative covering of $\mathbb{E}^3$. Let $P$ be the cube defined in (\ref{defineP}). Then define
\begin{align}
\theta(T,X,P)=\frac{\sum\limits_{\mathbf{x}\in X}vol(P\cap(T+\mathbf{x}))}{vol(P)}.\nonumber
\end{align}
Let $\mathfrak{X}$ denote the family of all such sets $X$. We call
\begin{align}
\theta(T,P)=\min_{X\in\mathfrak{X}}\theta(T,X,P)\nonumber
\end{align}
the covering density of $T$ for $P$. Since $P$ is a parallelohedron, clearly,
\begin{align}
\theta^t(T)\geq\theta(T,P).\nonumber
\end{align}

\begin{proof}[\textbf{Proof of Theorem \ref{density}}]
$T+X$ is a translative covering of $\mathbb{E}^3$. Without loss of generality, we suppose that $\mathbf{o}\in T+\mathbf{x}_{m+1}$ and $T+\mathbf{x}_{m+1}$ is intersected by $T+\mathbf{x}_1, T+\mathbf{x}_2, \ldots, T+\mathbf{x}_m$.
By (\ref{relation}), we have
\begin{align}\label{contain}
\bigcup\limits_{i=1}^{m+1}(T+\mathbf{x}_i)\subset D+\mathbf{x}_{m+1}\subset D(2T)\subset P.
\end{align}
We consider two cases.\\
\textbf{Case 1. $m\geq 63$.} By (\ref{vol(P)}), (\ref{vol(D)}) and (\ref{contain}),
\begin{align}
\theta(T,X,P)&=\frac{\sum\limits_{\mathbf{x}\in X}vol\left(P\cap(T+\mathbf{x})\right)}{vol(P)} \nonumber \\
&\geq\frac{vol(P\setminus (D+\mathbf{x}_{m+1}))+\sum\limits_{\mathbf{x}\in X}vol\left((D+\mathbf{x}_{m+1})\cap(T+\mathbf{x})\right)}{vol(P)} \nonumber \\
&\geq1-\frac{21}{64}+\frac{\sum\limits_{i=1}^{m+1}vol\left((D+\mathbf{x}_{m+1})\cap(T+\mathbf{x}_i)\right)}{vol(P)} \nonumber \\
&=\frac{43}{64}+(m+1)\cdot\frac{vol(T)}{vol(P)} \nonumber \\
&\geq \frac{193}{192}. \nonumber
\end{align}\\
\textbf{Case 2. $m\leq 62$.} Let $\partial(T)$ denote the boundary of $T$. Then
\begin{align}\label{boundary}
\partial(T+\mathbf{x}_{m+1})=\bigcup_{i=1}^{m}\left(\partial(T+\mathbf{x}_{m+1})\cap(T+\mathbf{x}_i)\right).
\end{align}
By Lemma \ref{homothetic}, we know that $(T+\mathbf{x}_{m+1})\cap (T+\mathbf{x}_i)$ is a single point or homothetic to $T$. Since $m$ is finite, if $(T+\mathbf{x}_{m+1})\cap (T+\mathbf{x}_1)$ is a single point $\mathbf{u}$, then there must be $i$ such that $\mathbf{u}\in int(T+\mathbf{x}_i)$. So
\begin{align}
\partial(T+\mathbf{x}_{m+1})=\bigcup_{i=2}^{m}\left(\partial(T+\mathbf{x}_{m+1})\cap(T+\mathbf{x}_i)\right),\nonumber
\end{align}
and $m-1\leq 62$ still holds.
Therefore, we suppose that
\begin{align}
(T+\mathbf{x}_{m+1})\cap (T+\mathbf{x}_i)=\lambda_iT+\mathbf{y}_i, 1\leq i\leq m\nonumber
\end{align}
holds for some suitable positive number $\lambda_i$ and a point $\mathbf{y}_i$.

Firstly, let $\mathbf{v}$ be a vertex of $T+\mathbf{x}_{m+1}$. If $\mathbf{v}\in \lambda_1T+\mathbf{y}_1$ and $\mathbf{v}\in \lambda_2T+\mathbf{y}_2$, then we must have $\lambda_1T+\mathbf{y}_1\subset \lambda_2T+\mathbf{y}_2$ or $\lambda_2T+\mathbf{y}_2\subset \lambda_1T+\mathbf{y}_1$, say $\lambda_1T+\mathbf{y}_1\subset \lambda_2T+\mathbf{y}_2$. So
\begin{align}
\partial(T+\mathbf{x}_{m+1})=\bigcup_{i=2}^{m}\left(\partial(T+\mathbf{x}_{m+1})\cap(T+\mathbf{x}_i)\right),\nonumber
\end{align}
and $m-1\leq 62$ still holds.
Therefore, we suppose that each vertex of $T+\mathbf{x}_{m+1}$ is covered exactly once. By Corollary \ref{triangle}(1), these vertices are covered by four different elements in $\{T+\mathbf{x}_1, T+\mathbf{x}_2, \ldots, T+\mathbf{x}_m\}$, say
\begin{align}
T+\mathbf{x}_1, T+\mathbf{x}_2, T+\mathbf{x}_3, T+\mathbf{x}_4.\nonumber
\end{align}
It follows from Corollary \ref{triangle}(2) that the total number of regular triangles obtained by intersecting $T+\mathbf{x}_1, T+\mathbf{x}_2, T+\mathbf{x}_3, T+\mathbf{x}_4$ with $\partial(T+\mathbf{x}_{m+1})$ is 12, and the sum of the areas of these regular triangles is
\begin{align}
3\cdot\frac{\sqrt{3}}{4}\left(\lambda_1^2+\lambda_2^2+\lambda_3^2+\lambda_4^2\right). \nonumber
\end{align}

Secondly, let $e_j$ denote the edge of $T+\mathbf{x}_{m+1}$ and let $E_j$ denote the set consisting of all elements in $\{T+\mathbf{x}_5, T+\mathbf{x}_6, \ldots, T+\mathbf{x}_m\}$ that intersect $e_j$, where $1\leq j\leq6$. Since they do not intersect the vertices of $T+\mathbf{x}_{m+1}$, it follows from Corollary \ref{triangle}(1) that $E_i\cap E_j=\varnothing$ if $i\neq j$. Thus we have $|E_1\cup E_2\cup\cdots\cup E_6|=|E_1|+|E_2|+\cdots+|E_6|$. Without loss of generality, we suppose that
\begin{align}
E_1\cup E_2\cup\cdots\cup E_6=\{T+\mathbf{x}_5, T+\mathbf{x}_6, \ldots, T+\mathbf{x}_{|E_1|+\cdots+|E_6|+4}\}.\nonumber
\end{align}
It follows from Corollary \ref{triangle}(2) that the total number of regular triangles obtained by intersecting $\bigcup E_j$ with $\partial(T+\mathbf{x}_{m+1})$ is
\begin{align}
2\left(|E_1|+|E_2|+\cdots+|E_6|\right), \nonumber
\end{align}
and the sum of the areas of these regular triangles is
\begin{align}
2\cdot\frac{\sqrt{3}}{4}\left(\lambda_5^2+\lambda_6^2+\cdots+\lambda_{|E_1|+\cdots+|E_6|+4}^2\right). \nonumber
\end{align}

Finally, let $f_j$ denote the face of $T+\mathbf{x}_{m+1}$ and let $F_j$ denote the set consisting of all elements in $\{T+\mathbf{x}_{|E_1|+\cdots+|E_6|+5}, T+\mathbf{x}_{|E_1|+\cdots+|E_6|+6}, \ldots,$ $T+\mathbf{x}_m\}$ that intersect $f_j$, where $1\leq j\leq4$. Since they do not intersect the edges of $T+\mathbf{x}_{m+1}$, it follows from Corollary \ref{triangle}(1) that $F_i\cap F_j=\varnothing$ if $i\neq j$. Thus we have
\begin{align}
\left|\bigcup\limits_{i=|E_1|+\cdots+|E_6|+5}^{m} (T+\mathbf{x}_i)\right|=\left|\bigcup\limits_{j=1}^{4}F_j\right|=\sum\limits_{j=1}^{4}\left|F_j\right|, \nonumber
\end{align}
which implies that
\begin{align}\label{m=}
m=4+\sum\limits_{j=1}^{6}|E_j|+\sum\limits_{j=1}^{4}|F_j|.
\end{align}
It follows from Corollary \ref{triangle}(2) that the total number of regular triangles obtained by intersecting $\bigcup F_j$ with $\partial(T+\mathbf{x}_{m+1})$ is
\begin{align}
|F_1|+|F_2|+|F_3|+|F_4|, \nonumber
\end{align}
and the sum of the areas of these regular triangles is
\begin{align}
\frac{\sqrt{3}}{4}\left(\lambda_{|E_1|+\cdots+|E_6|+5}^2+\lambda_{|E_1|+\cdots+|E_6|+6}^2+\cdots+\lambda_m^2\right). \nonumber
\end{align}

Combining the preceding results, we conclude that the total number of regular triangles obtained by intersecting $T+\mathbf{x}_1, T+\mathbf{x}_2, \ldots, T+\mathbf{x}_m$ with $\partial(T+\mathbf{x}_{m+1})$ is
\begin{align}\label{t=}
t=12+2\sum\limits_{j=1}^{6}|E_j|+\sum\limits_{j=1}^{4}|F_j|\leq 2m+4\leq128,
\end{align}
and the sum of the areas of these regular triangles is
\begin{align}
S=\frac{\sqrt{3}}{4}\left(3\sum\limits_{i=1}^{4}{\lambda_i}^2+2\sum\limits_{i=5}^{|E_1|+\cdots+|E_6|+4}{\lambda_i}^2+\sum\limits_{i=|E_1|+\cdots+|E_6|+5}^{m}{\lambda_i}^2\right). \nonumber
\end{align}
From (\ref{boundary}), we know that
\begin{align}\label{area}
S\geq 8\sqrt{3}.
\end{align}
According to Power-Mean Inequality, (\ref{m=}) and (\ref{t=}), we have
\begin{align}
\left(\left(3\sum\limits_{i=1}^{4}{\lambda_i}^3+2\sum\limits_{i=5}^{|E_1|+\cdots+|E_6|+4}{\lambda_i}^3+\sum\limits_{i=|E_1|+\cdots+|E_6|+5}^{m}{\lambda_i}^3\right)/t\right)^{\frac{1}{3}}
\geq\left(S/\frac{\sqrt{3}}{4}t\right)^{\frac{1}{2}} , \nonumber
\end{align}
where $\lambda_i>0$.
Then it follows from (\ref{area}) that
\begin{align}
\sum\limits_{i=1}^{m}{\lambda_i}^3\geq \frac{128\sqrt{2t}}{3t}\geq \frac{16}{3}. \nonumber
\end{align}
Therefore, in this case, we have
\begin{align}
\theta(T,X,P)&=\frac{\sum\limits_{\mathbf{x}\in X}vol\left(P\cap(T+\mathbf{x})\right)}{vol(P)}\nonumber \\
&=1+\frac{\sum\limits_{\mathbf{x}_i,\mathbf{x}_j\in X,i<j}vol\left(P\cap(T+\mathbf{x}_i)\cap(T+\mathbf{x}_j)\right)}{vol(P)} \nonumber \\
&\geq 1+\frac{\sum\limits_{1\leq i<j\leq m+1}vol\left((T+\mathbf{x}_i)\cap(T+\mathbf{x}_j)\right)}{vol(P)}\nonumber \\
&\geq 1+\frac{\sum\limits_{i=1}^{m}vol\left((T+\mathbf{x}_i)\cap(T+\mathbf{x}_{m+1})\right)}{vol(P)}\nonumber \\
&=1+\frac{\frac{\sqrt{2}}{12}(\lambda_1^3+\lambda_2^3+\cdots+\lambda_m^3)}{512}\nonumber \\
&\geq 1+\frac{\sqrt{2}}{1152}. \nonumber
\end{align}

As a conclusion of these two cases, we have
\begin{align}
\theta^t(T)\geq \theta(T,P)=\min_{X\in\mathfrak{X}}\theta(T,X,P)\geq1+\frac{\sqrt{2}}{1152}>1+1.227\times10^{-3}, \nonumber
\end{align}
and the theorem is proved.
\end{proof}

\begin{rem}
By covering the structure with asymptotic $\delta^l(D(2T))=\frac{45}{49}$ (see \cite{Hoylman}) instead of $\frac{vol(D(2T))}{vol(P)}=\frac{5}{6}$, we can slightly improve the lower bound in Theorem \ref{density}. However, since the improvement is not essential, its proof is not include here.
\end{rem}
\section*{Acknowledgement}
\quad \quad This work is supported by the National Natural Science Foundation of China (NSFC11921001, NSFC11801410 and NSFC11971346) and the Natural Key Research and Development Program of China (2018YFA0704701). The authors are grateful to Professor C. Zong for his supervision and discussion.

\end{document}